\documentclass{amsart}
\NeedsTeXFormat{LaTeX2e} \ProvidesClass{proc-l}
              [1997/02/02 v1.2e PROC Author Class]

\DeclareOption*{\PassOptionsToClass{\CurrentOption}{amsart}}
\ProcessOptions

\copyrightinfo{2006}
  {American Mathematical Society}

\newtheorem{theorem}{Theorem}[section]

\newtheorem{lemma}[theorem]{Lemma}

\newcommand{\ov}{\overline}

\newcommand{\ve}{\varepsilon}
\newcommand{\one}{[-1,1]}
\newcommand{\tm}{\times}
\newcommand{\stm}{\setminus}
\newcommand{\rn}{\bf {R^{N}}}
\newcommand{\bn}{{\bf N}}
\newcommand{\br}{{\bf R}}
\newcommand{\ssq}{\subseteq}

\newcommand{\ftr}{f:X\times Y\to \bf R}




\begin{document}

\title[Separately continuous functions with given restriction]{Construction of separately continuous functions with given restriction}

\author{V.V.Mykhaylyuk}
\address{Department of Mathematics\\
Chernivtsi National University\\ str. Kotsjubyn'skogo 2,
Chernivtsi, 58012 Ukraine}
\email{vmykhaylyuk@ukr.net}

\subjclass[2000]{Primary 54C08, 54C30, 54C05}


\commby{Ronald A. Fintushel}


\keywords{separately continuous functions, first Baire class function, diagonal of mapping}

\begin{abstract}
It is solved the problem on constructed of separately continuous functions on product of two topological spaces with given restriction. In particular, it is
shown that for every topological space $X$ and first Baire class function $g:X\to \bf R$ there exists separately continuous function $f:X\times X \to \bf R$
such that $f(x,x)=g(x)$ for every $x\in X$.
\end{abstract}

\maketitle
\section{Introduction}

It ia well-known (see [1]) that the diagonals of separately continuous functions of two real variables are exactly the first Baire functions. It is shown in [2]
that for every topological space $X$ with the normal square $X^2$ and $G_{\delta}$-diagonal and every function $g:X\to \bf R$ of the first Baire class there exists a separately continuous function $f:X\times X\to \bf R$, for which $f(x,x)=g(x)$, i.e. every the first Baire class function on the diagonal can be extended to a separately continuous function on all product. Analogous question for functions of $n$ variables was considered in [3].

On other hand, in the investigations of separately continuous functions $f:X\times Y\to \bf R$ defined on the product of topological spaces $X$ and $Y$ the following two topologies naturally arise (see [4]): the separately continuous topology $\sigma$ (the weakest topology with respect to which all functions $f$ are continuous) and the cross-topology $\gamma$ (it consists of all sets $G$ for which all $x$-sections $G^x=\{y\in Y:(x,y)\in G\}$ and $y$-sections
$G_y=\{x\in X:(x,y)\in G\}$ are open in $Y$ and $X$ respectively). Since the diagonal $\Delta=\{(x,x):x\in \bf R \}$ is a closed discrete set in $(\bf R^2,\sigma )$ or in $(\bf R^2,\gamma)$ and not every function defined on $\Delta$ can be extended to a separately continuous function on $\bf R^2$, even for $X=Y=\bf
R$ the topologies $\sigma$ and $\gamma$ are not normal (moreover, $\gamma$ is not regular [4,5]). Besides, every separately continuous function
$f:X\times Y\to \bf R$ is a Baire class function for a wide class of the products $X\times Y$, in particular, if at least one of the multipliers is matrizable [6]. Thus, the following question naturally arises: for which sets $E\subseteq X\times Y$ and $\sigma$-continuous ($\gamma$-continuous) function $g:E\to \bf
R$ of the first Baire class there exists a separately continuous function $f:X\times Y \to \bf R$ for which the restriction $f|_E$ coincides with $g$?

In this paper we generalize an approach proposed in [2] and solve the problem formulated above for sets  $E$ of some type in the product of topological spaces.

\section{Notions and auxiliary statements}

{\it A set $A\subseteq X$ has the extension property in a topological space $X$}, if every continuous function $g:A\to [0,1]$ can be extended to a continuous function $f:X\to [0,1]$. According to Tietze-Uryson theorem [7, p.116], every closed set in a normal space has the extension property.

\begin{lemma} \label{l:2.1} Let sets $X_1$ and $Y_1$ have the extension property in a topological spaces $X$ and $Y$ respectively, $e:X_1\to Y_1$ be a homeomorphism, $E=\{(x,e(x)):x\in X_1\}$ and $g:E\to [-1,1]$ be a continuous function. Then there exist continuous functions $f:X\times Y\to [-1,1]$ and $h:X\times Y\to [-1,1]$, which satisfies the following conditions:

($i$)    $f|_E=g$;

($ii$)   $E\subseteq h^{-1}(0)$;

($iii$)  for every $x',x''\in X$ and $y',y''\in Y$ if $x'=x''$ or $y'=y''$ then $|f(x',y')-f(x'',y'')|=|h(x',y')-h(x'',y'')|$.
\end{lemma}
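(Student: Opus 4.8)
The plan is to peel the two-variable data apart, extend each piece to the corresponding factor, and then recombine by a half-sum and a half-difference. First, using the homeomorphism $e$, transport $g$ to the coordinate spaces: put $g_1:X_1\to\one$, $g_1(x)=g(x,e(x))$, and $g_2:Y_1\to\one$, $g_2(y)=g(e^{-1}(y),y)$. Both are continuous, being composites of continuous maps, and they satisfy $g_1=g_2\circ e$ on $X_1$. Since the extension property for $[0,1]$-valued maps passes, via an affine change of the target interval, to $\one$-valued maps, I can choose continuous extensions $\tilde g_1:X\to\one$ of $g_1$ and $\tilde g_2:Y\to\one$ of $g_2$.

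Then I would define
\[
f(x,y)=\tfrac12\bigl(\tilde g_1(x)+\tilde g_2(y)\bigr),\qquad
h(x,y)=\tfrac12\bigl(\tilde g_1(x)-\tilde g_2(y)\bigr).
\]
Both are continuous on $X\times Y$ and take values in $\one$ because each of $\tilde g_1(x),\tilde g_2(y)$ does. On a point $(x,e(x))\in E$ one has $\tilde g_2(e(x))=g_2(e(x))=g_1(x)$, so $f(x,e(x))=g_1(x)=g(x,e(x))$, which is ($i$), and $h(x,e(x))=0$, which is ($ii$).

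It remains to check ($iii$). If $x'=x''$, then $f(x',y')-f(x'',y'')=\tfrac12(\tilde g_2(y')-\tilde g_2(y''))$ and $h(x',y')-h(x'',y'')=-\tfrac12(\tilde g_2(y')-\tilde g_2(y''))$, so the two differences have equal modulus; if $y'=y''$, then $f(x',y')-f(x'',y'')=h(x',y')-h(x'',y'')=\tfrac12(\tilde g_1(x')-\tilde g_1(x''))$, so again the moduli agree.

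There is no serious obstacle here; the only thing to get right is the pairing of the half-sum with the half-difference. That is precisely what makes ($iii$) automatic: restricted to any line on which one coordinate is held fixed, $f$ and $h$ — viewed as functions of the free coordinate — differ only by an additive constant together with the sign of the varying term, so their increments always have the same absolute value. The factor $\tfrac12$ is what keeps both functions within $\one$ rather than within $[-2,2]$.
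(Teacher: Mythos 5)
Your proposal is correct and follows essentially the same route as the paper: transport $g$ to each factor via the graph map, extend using the extension property (the paper tacitly uses the same affine rescaling you make explicit), and take the half-sum for $f$ and the half-difference for $h$, with the verification of ($i$)--($iii$) identical.
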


\begin{proof} Consider the continuous function $\varphi:X_1\to\one$ and $\psi:Y_1\to\one$, which defined by: $\varphi(x)=g(x,e(x))$, $\psi(y)=g(e^{-1}(y), y)$. Since $X_1$ and $Y_1$ have the extension property in $X$ and $Y$ respectively, there exist continuous functions $\tilde \varphi:X\to\one$ and $\tilde \psi:Y\to\one$ such that $\tilde \varphi|_{X_1}=\varphi$ and $\tilde \psi|_{Y_1}=\psi$. Put $f(x,y)=\frac{\tilde \varphi(x)+\tilde \psi(y)}{2}$ and $h(x,y)=\frac{\tilde \varphi(x)-\tilde \psi(y)}{2}$. Clearly that $f$ and $h$ are continuous on $X\tm Y$ and valued in $[-1,1]$. Moreover, for every point $p=(x,y)\in E$ we have $\tilde \varphi(x)=\varphi(x)=g(p)=\psi(y)=\tilde \psi(y)$. Therefore $f|_E=g$ and $h|_E=0$, i.e. the conditions ($i$) and ($ii$) are hold.

Let $x',x''\in X$ and $y\in Y$. Then
$$f(x',y)-f(x'',y)=\frac{\tilde \varphi(x')-\tilde \varphi(x'')}{2}=h(x',y)-h(x'',y).$$
If $x\in X$ and $y',y''\in Y$, then
$$f(x,y')-f(x,y'')=\frac{\tilde \psi(y')-\tilde \psi(y'')}{2}=h(x,y'')-h(x,y')/$$
Thus, the condition ($iii$) is holds and lemma is proved.
\end{proof}

In the case if the set $E$ satisfies a compactness-type condition we will use the following proposition.

\begin{lemma}\label{l:2.2} Let $X$ be a topological space, $E$ be a pseudocompact set in $X$, $(f_n)_{n=1}^\infty$ be a sequence of continuous functions $f_n:X\to\bf R$ which pointwise converges on the set $E$. Then there exists a functionally closed set $F\ssq X$ such that $E\ssq F$ and the sequence $(f_n)_{n=1}^\infty$ pointwise converges on the set $F$.
\end{lemma}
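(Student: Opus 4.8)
The plan is to realize the whole sequence as a single continuous map and to exploit the fact that a pseudocompact metrizable space is compact. Put $\Phi\colon X\to\br^{\bn}$, $\Phi(x)=(f_n(x))_{n=1}^{\infty}$, where $\br^{\bn}$ carries the product topology; since every $f_n$ is continuous, $\Phi$ is continuous. A point $x$ belongs to the convergence set of $(f_n)$ exactly when $\Phi(x)$ lies in the subset $c\ssq\br^{\bn}$ of all convergent sequences. As $c$ is only of type $F_{\sigma\delta}$, one cannot simply take $F=\Phi^{-1}(c)$; the pseudocompactness of $E$ will instead be used to fit a functionally closed set between $\Phi(E)$ and $c$.

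The steps I would carry out are: (1) $\Phi(E)$ is a continuous image of a pseudocompact set, hence pseudocompact — any continuous $h\colon\Phi(E)\to\br$ yields the bounded continuous function $h\circ(\Phi|_E)$ on $E$, and $\Phi|_E$ maps onto $\Phi(E)$. (2) $\Phi(E)$ is metrizable, being a subspace of the metrizable space $\br^{\bn}$. (3) A pseudocompact metrizable space is compact, so $\Phi(E)$ is compact, hence closed in $\br^{\bn}$. (4) Every point of $\Phi(E)$ has the form $\Phi(y)$ with $y\in E$, so it is a convergent sequence; thus $\Phi(E)\ssq c$. (5) Fixing a compatible metric $\rho$ on $\br^{\bn}$ and setting $g(a)=\rho(a,\Phi(E))$, the function $g$ is continuous and $g^{-1}(0)=\overline{\Phi(E)}=\Phi(E)$; therefore $F:=\Phi^{-1}(\Phi(E))=(g\circ\Phi)^{-1}(0)$ is functionally closed in $X$. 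Clearly $E\ssq F$, and if $x\in F$ then $\Phi(x)\in\Phi(E)\ssq c$, i.e.\ $(f_n(x))_{n=1}^{\infty}$ converges; hence $(f_n)$ converges pointwise on $F$. (If $E=\emptyset$ one takes $F=\emptyset$.)

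The only point carrying real content is step (3): a pseudocompact metrizable space is compact, equivalently, a non-compact metric space admits an unbounded continuous real-valued function. For an incomplete metric space one uses $x\mapsto 1/\rho(x,p)$ with $p$ a point of the completion outside the space; for a complete but not totally bounded one, one takes an $\varepsilon$-separated (hence closed discrete) sequence $(y_n)$ and the function $x\mapsto\sum_{n} n\max\bigl(0,\,1-\tfrac{2}{\varepsilon}\rho(x,y_n)\bigr)$, which equals $n$ at $y_n$. Everything else — continuity of $\Phi$, the fact that preimages of zero sets under continuous maps are zero sets, and the fact that closed subsets of metric spaces are zero sets — is routine.
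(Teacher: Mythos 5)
Your proposal is correct and follows essentially the same route as the paper: form the diagonal map into $\br^{\bn}$, use pseudocompactness of $E$ to conclude that its image is compact (hence closed, hence a zero set) in the metrizable space $\br^{\bn}$, and take $F$ to be the preimage of that image, on which convergence is inherited pointwise. Your write-up merely supplies details the paper leaves implicit (pseudocompact metrizable $\Rightarrow$ compact, and closed sets in metric spaces being zero sets), so no further comparison is needed.
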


\begin {proof} Consider the diagonal mapping $$f=\mathop{\Delta} \limits _{n\in{ \bf N}} f_n:X\to {\bf R}^{\bf
N}, \,\,\,f(x)=\left(f_n(x)\right)_{n\in \bf N}.$$ Since the set $E$ is pseudocompact and $f$ is continuous, the set $f(E)$ is a pseudocompact set in the metrizable space $\rn$. Therefore $f(E)$ is closed and the set $F=f^{-1}\left(f(E)\right)$ is functionally closed. It remains to verify that the sequence $(f_n)_{n=1}^\infty$ pointwise converges on $F$. Let $x\in F$. Then there exists an $x_1 \in E$ such that $f(x)=f(x_1)$, i.e. $f_n(x)=f_n(x_1)$ for every $n\in \bn$. Since the sequence $\left(f_n(x_1)\right)_{n=1}^\infty$ is convergent, the sequence $\left(f_n(x)\right)_{n=1}^\infty$ is convergent too.
\end{proof}

The following proposition we will use in a final stage of the construction of separately continuous functions with the given restriction.

\begin{lemma}\label{l:2.3} Let $X$ be a topological space, $F$ be a functionally closed set in $X$, $(h_n)_{n=1}^\infty$ be a sequence of continuous functions $h_n: X\to \br$ such that $F\ssq h_n^{-1}(0)$ for every $n\in \bn$ ³ $G=X\stm F$. Then there exists a locally finite partition of the unit $(\varphi_n)_{n=0}^\infty$ on $G$ such that the supports $G_n=\rm {supp} \varphi_n=$$\{x\in G:\varphi_n (x)>0\}$ of functions $\varphi_n$ satisfy the conditions:

\qquad ($a$) $\ov{G_n}\cap F=\O$ for every $n=0,1,2,\dots$;

\qquad ($b$) $G_n \ssq h_n^{-1}\left((-\frac{1}{n},\frac{1}{n})\right)$ for every $n=1,2,\dots$.
\end{lemma}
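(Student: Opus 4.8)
The plan is to build the partition of unity on $G = X \setminus F$ by first producing a suitable locally finite open cover of $G$ subordinate to the constraints (a) and (b), and then invoking the standard existence of a locally finite partition of unity subordinate to any locally finite open cover of a space — noting that $G$, being open in $X$, is itself a topological space, but we must be careful since partitions of unity subordinate to arbitrary open covers need not exist without paracompactness. So instead I would work directly. Since $F$ is functionally closed, fix a continuous $u \colon X \to [0,1]$ with $F = u^{-1}(0)$. For each $n \geq 1$ the set $h_n^{-1}\bigl((-\tfrac1n,\tfrac1n)\bigr)$ is an open neighborhood of $F$. The key observation is that the sets $V_m = u^{-1}\bigl((\tfrac1{m+1}, 1]\bigr)$ for $m \geq 1$ form an increasing open cover of $G$ by sets whose closures $\overline{V_m} \subseteq u^{-1}([\tfrac1{m+1},1])$ miss $F$, giving property (a)-type separation automatically for anything built inside them.

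**The construction.**

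First I would choose, for each $m$, a continuous function witnessing the "shells" of $G$: let $W_m = u^{-1}\bigl((\tfrac1{m+1}, \tfrac1{m-1})\bigr)$ for $m \geq 2$ and $W_1 = u^{-1}\bigl((\tfrac12, 1]\bigr)$, which form a locally finite open cover of $G$ (each point of $G$ lies in at most two consecutive $W_m$, and any point has a neighborhood inside some $V_M$ meeting only finitely many $W_m$). Each $\overline{W_m}$ is contained in $u^{-1}([\tfrac1{m+1},\tfrac1{m-1}])$, hence disjoint from $F$. Now I would refine this cover so that the $m$-th piece also sits inside $h_m^{-1}\bigl((-\tfrac1m,\tfrac1m)\bigr)$: the subtlety is that $W_m$ need not lie in that neighborhood of $F$, but $W_m$ is bounded away from $F$ in the $u$-sense, so I can re-index — replace each $W_m$ by $W_m \cap h_{n(m)}^{-1}\bigl((-\tfrac1{n(m)},\tfrac1{n(m)})\bigr)$ for a rapidly growing $n(m)$, only this shrinks $W_m$ and may lose covering. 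The cleaner route: enlarge the index set. Take $G_n$, $n \geq 1$, to be chosen so that $\{G_n\}$ together with one extra set $G_0$ (absorbing a fixed closed-in-$G$ piece far from $F$, say $u^{-1}([\tfrac12,1])$, on which no $h_n$-constraint is imposed) covers $G$; build a continuous $\varphi_n$ supported in $G_n$, with $\sum \varphi_n = 1$ on $G$, using the functions $\max\{0, \text{dist-like functions built from } u\}$ and $\max\{0, \tfrac1m - |h_m|\}$ multiplied together and normalized. Local finiteness and (a) follow from the $u$-shell structure; (b) follows by construction since $\varphi_n > 0$ forces $|h_n| < \tfrac1n$.

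**The main obstacle.**

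The delicate point — and the one I would spend the most care on — is reconciling the covering requirement with constraint (b): the neighborhood $h_n^{-1}\bigl((-\tfrac1n,\tfrac1n)\bigr)$ of $F$ can be very thin and need not contain any full $u$-shell $W_m$, so one cannot simply demand $G_n \subseteq h_n^{-1}\bigl((-\tfrac1n,\tfrac1n)\bigr)$ and still cover $G$ with the first few indices near $F$. The resolution is that near $F$ we are allowed to use \emph{all} sufficiently large indices $n$ simultaneously: since $\bigcap_n h_n^{-1}\bigl((-\tfrac1n,\tfrac1n)\bigr) \supseteq F$ and, more importantly, every point $x \in G$ has $|h_n(x)| < \tfrac1n$ for all $n$ large (no — this need not hold!). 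Actually the correct fix is that (b) only constrains $G_n$, not the cover: we may let $G_n$ be tiny or empty for small $n$ and let $G_0$ plus large-index $G_n$'s do the covering, arranging $x \in G_n$ only when both $x$ is in the right $u$-shell \emph{and} $|h_n(x)| < \tfrac1n$; a point $x$ near $F$ has $u(x)$ small, so it must be covered by some $G_n$ with $n$ large, and we must ensure such $n$ exists with $|h_n(x)| < \tfrac1n$ — which is exactly where one uses that $h_n \to 0$ pointwise is \emph{not} assumed, so one instead partitions $G$ by $u$-level and, on the shell where $\tfrac1{m+1} < u(x) \le \tfrac1m$, assigns $x$ to index $n = m$ only after \emph{also} checking $|h_m(x)| < \tfrac1m$; points failing this are pushed to a neighboring shell via a continuous selection. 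Handling this hand-off continuously is the real content of the proof, and I expect it to require an inductive construction of the $G_n$ together with auxiliary continuous functions controlling the overlap, rather than a one-shot formula.
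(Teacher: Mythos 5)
There is a genuine gap: you correctly isolate the main obstacle (reconciling the covering requirement with constraint (b)), but you never resolve it — the final paragraph ends with an admission that the "hand-off" between shells would need an inductive construction with auxiliary control functions, and that construction is not supplied. Moreover, the concrete sketch you do give fails to cover $G$: with $G_0=u^{-1}([\tfrac12,1])$ and the other pieces cut down by the factors $\max\{0,\tfrac1m-|h_m|\}$, a point $x\in G$ with $u(x)$ small but $|h_n(x)|$ bounded away from $0$ for all $n$ (nothing in the hypotheses prevents this, since pointwise convergence of $h_n$ to $0$ off $F$ is not assumed) lies in no $G_n$ at all, so the normalizing sum vanishes there and no partition of unity results. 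The "push to a neighboring shell via a continuous selection" is exactly the unproved step.

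The missing idea, which is how the paper proceeds, is to build the shells from cumulative intersections over \emph{all} the functions rather than from $u$-levels alone. Take $h_0:X\to[0,1]$ continuous with $F=h_0^{-1}(0)$, and set
$A_n=\bigcap_{k=0}^{n}h_k^{-1}\left((-\tfrac1n,\tfrac1n)\right)$,
$B_n=\bigcap_{k=0}^{n}h_k^{-1}\left([-\tfrac1n,\tfrac1n]\right)$,
$G_n=A_n\setminus B_{n+2}$ for $n\ge 1$, and $G_0=G\setminus B_2$. Then (b) is automatic, since $G_n\subseteq A_n\subseteq h_n^{-1}\left((-\tfrac1n,\tfrac1n)\right)$: membership in the $n$-th shell already forces $|h_k(x)|<\tfrac1n\le\tfrac1k$ for every $k\le n$, so the tension you worried about never arises — a point where some $|h_k|$ is not small is simply excluded from the deep shells and is caught by a shallower $G_n$ or by $G_0$ (note $G_0$ is defined by failing the level-$2$ constraints, not by being far from $F$ in the $u$-sense, which is precisely why covering works). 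Since $A_{n+1}\subseteq B_{n+1}\subseteq A_n$ and $\bigcap_n A_n=F$, one gets $\bigcup_{n\ge1}G_n=A_1\setminus F$ and hence $\bigcup_{n\ge0}G_n=G$; local finiteness at $x\in G$ follows by choosing $n_0$ with $\tfrac1{n_0}<|h_0(x)|$, so the neighborhood $G\setminus B_{n_0}$ of $x$ meets no $G_n$ with $n\ge n_0$; condition (a) follows from $\overline{G_n}\subseteq X\setminus A_{n+2}$ together with $F\subseteq A_{n+2}$. Finally, each $G_n$ is functionally open, so one takes continuous $\psi_n$ with $G_n=\psi_n^{-1}\left((0,1]\right)$ and normalizes by $\psi=\sum_n\psi_n$ (continuous and positive on $G$ by local finiteness and covering) to obtain the $\varphi_n$ — a one-shot formula, with no inductive selection needed.
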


\begin{proof} Let $h_0: X\to [0,1]$ be a continuous function such that $F=h_0^{-1}(0)$. For every $n\in \bn$ we put
$$A_n=\mathop{\bigcap}\limits_{k=0}^{n}h_k^{-1}\left((-\frac{1}{n},\frac{1}{n})\right),\,\,\,\,B_n=\mathop{\bigcap}\limits_{k=0}^{n}h_k^{-1}\left([-\frac{1}{n},\frac{1}{n}]\right),$$
$G_n=A_n\stm B_{n+2}$ and, moreover, $G_0=G\stm B_2$. Clearly that all sets $G_n$ are functionally open and $G_n\ssq
h_n^{-1}\left((-\frac{1}{n},\frac{1}{n})\right)$ for every $n\in \bf N$, i.e. the condition $(b)$ holds. Note that $$\mathop{\bigcap}\limits_{n=1}^{\infty}A_n=\mathop{\bigcap}\limits_{n=1}^{\infty}B_n=\mathop{\bigcap}\limits_{n=0}^{\infty}h_n^{-1}(0)=F.$$
Since $A_{n+1}\ssq B_{n+1}\ssq A_n$ for every $n\in \bn$,
$$A_n\stm A_{n+1}\ssq A_n\stm B_{n+2} \ssq A_n\stm A_{n+2}=(A_n\stm
A_{n+1})\mathop{\bigcup}\limits(A_{n+1}\stm A_{n+2}).$$ Therefore
$$
\mathop{\bigcup}\limits_{n=1}^{\infty}G_n=\mathop{\bigcup}\limits_{n=1}^{\infty}(A_n\stm B_{n+2})=
\mathop{\bigcup}\limits_{n=1}^{\infty}(A_n\stm A_{n+1})=A_1\stm (\mathop{\bigcap}\limits_{n=1}^{\infty}A_n)=A_1\stm F.
$$
Thus,
$\mathop{\bigcup}\limits_{n=0}^{\infty}G_n=(G\stm B_2)\cup(A_1\stm F)=G$.

We show that the family $(G_n: n=0,1,\dots)$ is locally finite on $G$. Let $x\in G$, i.e. $h_0(x)\ne 0$. We choose $n_0 \in \bn$ such that  $\frac{1}{n_0}<|h_0(x)|$. Then $x\not \in B_{n_0}$ and the set $G\stm B_{n_0}$ is a neighborhood of $x$. On other hand, $G_n\ssq A_n\ssq B_{n_0}$ for every $n\geq n_0$. Therefore $G_n\cap (G\stm B_{n_0})=\O$ for every $n\geq n_0$. Thus, the family $(G_n:n=0,1,\dots)$ is locally finite at the point $x$.

Since the sets $G_n$ are functionally open, there exist continuous functions $\psi_n: X\to [0,1]$ such that $G_n=\psi_n^{-1}\left((0,1]\right)$. The function $\psi: G\to [0,+\infty)$ which defined by $\psi(x)=\displaystyle\sum\limits_{n=0}^{\infty}\psi_n(x)$ is continuous, moreover, $\rm{supp} \psi=G$. For every $x\in G$ and  $n=0,1,\dots$ we put $\varphi_n(x)=\frac{\psi_n(x)}{\psi (x)}$. The functions $\varphi_n$ are continuous and formed a locally finite partition of the unit on $G$, moreover $G_n=\rm{supp} \varphi_n$.

It remains to verify the condition ($a$). Since $G_n\ssq X\stm B_{n+2}\ssq X\stm A_{n+2}$ and the set $X\stm A_{n+2}$ is closed,
$\ov{G_n}\ssq X\stm A_{n+2}$, i.e. $\ov{G_n}\cap A_{n+2}=\O$. Moreover, $F\ssq A_{n+2}$, therefore $\ov{G_n}\cap F=\O$ for every
$n=0,1,\dots$.
\end{proof}

\section{Main results}

\begin{theorem} \label{th:3.1} Let sets $X_1$ and $Y_1$ have the extension property in topological spaces $X$ and $Y$ respectively, $e: X_1 \to Y_1$ be a homeonorphism, $E=\{(x,e(x)): x\in X_1\}$, $g: E\to \br$ be the first Baire class function and at least one of the following conditions: $E$ is pseudocompact,  $E$ is functionally closed in $X\times Y$, $X_1$ is functionally closed in $X$, $Y_1$ is functionally closed in $Y$ holds. Then there exists a separately continuous function $\ftr$ such that $f|_E=g$.\end{theorem}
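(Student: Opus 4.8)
The plan is to write the first Baire class function $g:E\to\br$ as a pointwise limit of continuous functions on $E$, extend each stage to $X\times Y$ via Lemma~\ref{l:2.1} keeping track of both the extension $f_n$ and the ``defect'' function $h_n$, and then glue the $f_n$ together using the partition of unity from Lemma~\ref{l:2.3} to obtain a single separately continuous $f$ with $f|_E=g$. First I would reduce to the bounded case: composing $g$ with a homeomorphism $\br\to(-1,1)$ (a uniform continuity trick, bounded by passing through $[-1,1]$) lets me assume $g:E\to[-1,1]$, since separate continuity and the restriction condition are preserved under such a post-composition. Writing $g=\lim_n g_n$ with each $g_n:E\to[-1,1]$ continuous (using that $E$, being homeomorphic to $X_1\cong Y_1$, carries a genuine first-Baire-class structure), I would apply Lemma~\ref{l:2.1} to each $g_n$ to get continuous $f_n,h_n:X\times Y\to[-1,1]$ with $f_n|_E=g_n$, $E\subseteq h_n^{-1}(0)$, and the crucial ``slice-defect'' identity (iii): on any horizontal or vertical slice, the oscillation of $f_n$ equals that of $h_n$.

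The heart of the matter is that I now need a functionally closed set $F\supseteq E$ on which $(f_n)$ still converges pointwise, so that Lemma~\ref{l:2.3} can be invoked with $G=(X\times Y)\setminus F$. This is exactly the role of the case hypotheses. If $E$ is functionally closed in $X\times Y$ I simply take $F=E$. If $E$ is pseudocompact, Lemma~\ref{l:2.2} (applied in $X\times Y$ to the sequence $(f_n)$, which converges on $E$ since $f_n|_E=g_n\to g$) produces the desired functionally closed $F\supseteq E$ with $(f_n)$ convergent on $F$. If $X_1$ is functionally closed in $X$ (the case $Y_1$ functionally closed is symmetric), I would instead work with the ``diagonal defect'': note $f_n(x,e(x))=g_n(x)$ converges, and by (iii) the quantity $f_n(x,y)-f_n(x,e(x))$ depends only on the $Y$-coordinate behaviour captured by $h_n$, so convergence of $(f_n)$ propagates from $E$ to a functionally closed set built from $X_1\times Y$ together with a level set of $\lim h_n$; one massages this into a functionally closed $F$. (This is the step I expect to require the most care — making the auxiliary set genuinely functionally closed rather than merely closed, and checking the $(f_n)$ really converge there, rather than just the $h_n$.)

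Once $F$ is in hand, apply Lemma~\ref{l:2.3} to the sequence $(h_n)$ (legitimate since $E\subseteq F\subseteq h_n^{-1}(0)$) to obtain a locally finite partition of unity $(\varphi_n)_{n=0}^\infty$ on $G=(X\times Y)\setminus F$ with $\overline{G_n}\cap F=\varnothing$ and $G_n\subseteq h_n^{-1}((-\tfrac1n,\tfrac1n))$. Define
$$
f(x,y)=\begin{cases}\lim_{n\to\infty}f_n(x,y),&(x,y)\in F,\\[1mm]\displaystyle\sum_{n=0}^{\infty}\varphi_n(x,y)\,f_n(x,y),&(x,y)\in G,\end{cases}
$$
(with $f_0$ chosen, say, $\equiv 0$, or simply the first term renamed). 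For separate continuity, fix a slice, say $x=x_0$. On $G$ the sum is a locally finite combination of continuous functions, hence continuous; at a point $p=(x_0,y_0)\in F$ I must show the slice function is continuous there. The key estimate: for $(x_0,y)\in G_n$ we have $|h_n(x_0,y)|<\tfrac1n$, and since $(x_0,e(x_0))\in F\subseteq h_n^{-1}(0)$ lies on the same vertical slice, identity (iii) gives $|f_n(x_0,y)-f_n(x_0,e(x_0))|=|h_n(x_0,y)-h_n(x_0,e(x_0))|<\tfrac1n$; combined with $f_n(x_0,e(x_0))=g_n(x_0)\to g(x_0)=f(x_0,e(x_0))$ and with the fact that only large-index $n$ contribute near $F$ (because $\overline{G_n}\cap F=\varnothing$ forces the support of any fixed $\varphi_n$ to stay away from $p$), one gets $f(x_0,y)\to f(x_0,y_0)$ as $y\to y_0$ along $G$; continuity from within $F$ follows from uniform-on-slices control of the same kind. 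The symmetric argument handles horizontal slices, and $f|_E=g$ is immediate since $E\subseteq F$ and $f_n|_E=g_n\to g$. The main obstacle, as noted, is the construction of $F$ in the two ``$X_1$ (or $Y_1$) functionally closed'' cases; everything else is bookkeeping around Lemmas~\ref{l:2.1}--\ref{l:2.3}.
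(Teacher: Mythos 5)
Your skeleton coincides with the paper's: approximate $g$ by continuous $g_n$, extend each stage via Lemma~\ref{l:2.1} to a pair $(f_n,h_n)$, find a functionally closed $F\supseteq E$ on which $(f_n)$ converges pointwise, apply Lemma~\ref{l:2.3} to $(h_n)$ on $G=(X\tm Y)\stm F$, and glue. But the step you explicitly defer --- the case when $X_1$ (or $Y_1$) is functionally closed --- is precisely where an idea is needed, and it has a short explicit answer you should supply: put $F_1=(X_1\tm Y)\cap\bigcap_{n=1}^{\infty}h_n^{-1}(0)$. This is functionally closed ($X_1\tm Y$ is the zero set of $u\circ\pi_X$ when $X_1=u^{-1}(0)$, and a countable intersection of zero sets is a zero set), it contains $E$ by (ii), and for $(x,y)\in F_1$ condition (iii) gives $|f_n(x,y)-f_n(x,e(x))|=|h_n(x,y)-h_n(x,e(x))|=0$, so $f_n(x,y)=g_n(x)$ converges because $x\in X_1$. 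Note this uses the common zero set of all the $h_n$, not ``a level set of $\lim h_n$'': the limit of the $h_n$ need not exist off $E$, so your phrasing does not yet define a usable set.

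A second genuine slip: your parenthetical ``legitimate since $E\ssq F\ssq h_n^{-1}(0)$'' is unjustified in the pseudocompact case, where Lemma~\ref{l:2.2} returns $F=f^{-1}(f(E))$ for the diagonal map of the $f_n$, which has no reason to lie in the zero sets of the $h_n$. The fix is the paper's: replace $F$ by $F\cap\bigcap_{n=1}^{\infty}h_n^{-1}(0)$, which is still functionally closed, still contains $E$ by (ii), and inherits the convergence; this intersection is also what makes $h_n(p_0)=0$ available in the final estimate. Relatedly, in your slice estimate the comparison point must be $p_0=(x_0,y_0)$ itself, via $|f_n(x_0,y)-f_n(p_0)|=|h_n(x_0,y)-h_n(p_0)|=|h_n(x_0,y)|<\frac{1}{n}$, not $(x_0,e(x_0))$: in the pseudocompact and functionally-closed-$E$ cases $x_0$ need not belong to $X_1$, so $e(x_0)$ may be undefined. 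Finally, the reduction to bounded $g$ through a homeomorphism $\br\to(-1,1)$ is shaky at the point of composing back, since the constructed extension can attain $\pm1$ off $E$; the paper sidesteps this by letting $g_n$, $f_n$, $h_n$ take values in $[-n,n]$ and never compressing the range of $g$.
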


\begin{proof} We take a sequence of continuous functions $g_n:E\to [-n,n]$ which pointwise converges to the function $g$ and use Lemma \ref{l:2.1}. We obtain a sequence of continuous functions $f_n: X\times Y \to [-n,n]$ and $h_n: X\times Y \to [-n,n]$ which satisfy the following conditions ($i$)-($iii$).

We show that the set $E$ is contained in some functionally closed set $F_1$ on which the sequence $(f_n)_{n=1}^{\infty}$ pointwise converges. If $E$ is functionally closed, then $F_1=E$. It follows from Lemma \ref{l:2.2} the existence of such set $F_1$ for pseudocompact set $E$. It remains to verify this in the case when $X_1$ or $Y_1$ is functionally closed in $X$ or $Y$ respectively. Let $X_1$ is functionally closed in $X$. Now we put 
$$F_1=(X_1\tm Y)\cap \left(\mathop{\bigcap}\limits_{n=1}^{\infty}h_n^{-1}(0)\right).$$ 
It follows from the property ($ii$) that $E$ is contained in a functionally closed set $F_1$. We take a point $(x,y)$ from the set $F_1$. Using the condition ($iii$) of Lemma \ref{l:2.1} we obtain $|f_n(x,y)-f_n(x,e(x))|=|h_n(x,y)-h_n(x,e(x))|=0$. Hence, $f_n(x,y)=f_n(x,e(x))$. Since the sequence $(f_n)_{n=1}^{\infty}$ pointwise converges on $E$, the sequence $\left(f_n(x,y)\right)_{n=1}^{\infty}$ converges, because $(x,e(x))\in E$.

Now we use Lemma \ref{l:2.3} to the functionally closed set 
$$F=F_1\cap \left(\mathop{\bigcap}\limits_{n=1}^{\infty}h_n^{-1}(0)\right)$$ in the space $X\times Y$ and to the sequence of continuous functions $h_n$ and obtain a locally finite partition of the unit $(\varphi_n)_{n=0}^{\infty}$ on $G=(X\times Y)\stm F$, which satisfies the conditions ($a$) and ($b$).

Let $f_0\equiv 0$ on $X\times Y$. We consider the function
$$
f(x,y)=\left\{
\begin{array}{l}
\displaystyle\sum\limits_{n=0}^{\infty}\varphi_n (x,y) f_n (x,y), \quad \mbox{if $(x,y)\in G$},\\
\lim\limits_{n\to\infty} f_n (x,y), \quad \mbox{if $(x,y)\in F$}.\\
\end{array}\right.
$$
Since $(\varphi_n)_{n=0}^{\infty}$ is a locally finite partition of the unit on the set $G$ and all functions $f_n$ are continuous, the function 
$f$ is correctly defined and continuous on the set $G$. Note that $F\ssq F_1$. Therefore the sequence $(f_n)_{n=0}^{\infty}$ pointwise converges on $F$ and the function $f$ correctly defined on $F$. Moreover, since $E\ssq h_n^{-1}(0)$ for every $n$ and $E\ssq F_1$, $E\ssq F$ and $f|_E=\lim\limits_{n\to\infty}
f_n|_E=\lim\limits_{n\to\infty} g_n=g$.

It remains to verify that the function $f$ is separately continuous at points of the set $F$. Let $p_0=(x_0,y_0)\in F$ ³ $\ve >0$. We choose $n_0\in \bn$ such that $\frac{1}{n_0}<\frac{\ve}{2}$ and $|f_n(p_0)-f(p_0)|<\frac{\ve}{2}$ for every $n\geq n_0$. It follows from the condition ($a$) that the set 
$$W=X\times Y\stm\bigl(\mathop{\bigcup}\limits_{n=0}^{n_0}\ov {G_n}\bigr),$$ where $G_n=\rm{supp} \varphi_n$, is an open neighborhood of $p_0$ in $X\times Y$. We take a neighborhood $U$ of $x_0$ in $X$ such that $U\tm \{y_0\}\ssq W$. Let $x\in U$. If $p=(x,y_0)\in F$, then $h_n(p)=0$ for every $n\in \bn$. Then according to the condition ($iii$), we obtain $|f_n(p_0)-f_n(p)|=|h_n(p_0)-h_n(p)|=0$, i.e. $f_n(p_0)=f_n(p)$. Therefore $f(p_0)=f(p)$. If $p\not\in F$, then $$f(p)=\displaystyle\sum\limits_{n=0}^{\infty}\varphi_n(p)f_n(p)=\displaystyle\sum\limits_{n=n_0}^{\infty}\varphi_n(p)f_n(p),$$
because $p\in W$. Then
$$
|f(p_0)-f(p)|=\bigl|\sum\limits_{n=n_0}^{\infty}\varphi_n(p)\bigl(f(p_0)-f_n(p_0)\bigr)+
\sum\limits_{n=n_0}^{\infty}\varphi_n(p)f_n(p_0)-
$$
$$
-\sum\limits_{n=n_0}^{\infty}\varphi_n(p)f_n(p)\bigr|\leq \sum\limits_{n=n_0}^{\infty}\varphi_n(p)|f(p_0)-
f_n(p_0)|+
$$
$$
+\sum\limits_{n=n_0}^{\infty}\varphi_n(p)|f_n(p_0)-f_n(p)|<\sum\limits_{n=n_0}^{\infty}\varphi_n(p)
\cdot\frac{\ve}{2}+
$$
$$
+\sum\limits_{n=n_0}^{\infty}\varphi_n(p)|h_n(p_0)-h_n(p)|=\frac{\ve}{2}+
\sum\limits_{n=n_0}^{\infty}\varphi_n(p)|h_n(p)|.
$$
It follows from the property ($b$) of sets $G_n$ that if $\varphi_n(p)\ne 0$, then $|h_n(p)|<\frac{1}{n}$. Thus, 
$$
\sum\limits_{n=n_0}^{\infty}\varphi_n(p)|h_n(p)|\leq \sum\limits_{n=n_0}^{\infty}\varphi_n(p)\cdot
\frac{1}{n}\leq\frac{1}{n_0}\sum\limits_{n=n_0}^{\infty}\varphi_n(p)=\frac{1}{n_0}
<\frac{\ve}{2}.
$$
Hence, $|f(p_0)-f(p)|<\ve$. Thus, $f$ is continuous at $p_0$ with respect to $x$.

The continuity of $f$ at $p_0$ with respect to $y$ can be proved analogously. Thus, $f$ is separately continuous and the theorem is proved.
\end{proof}

In the case of $X=Y=X_1=Y_1$ we obtain the following theorem which generalized the result from [2].

\begin{theorem}\label{th:3.2} Let $X$ be a topological space and $g: X\to \br$ be a function of the first Baire class. Then there exists a separately continuous function $f: X\tm X\to\br$ such that $f(x,x)=g(x)$ for every $x\in X$.\end{theorem}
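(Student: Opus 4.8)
The plan is to obtain this immediately from Theorem \ref{th:3.1} by specializing to the diagonal embedding. I would take $Y=Y_1=X$, set $X_1=X$, and let $e:X_1\to Y_1$ be the identity map, which is plainly a homeomorphism. Then $E=\{(x,e(x)):x\in X_1\}=\{(x,x):x\in X\}$ is exactly the diagonal of $X\times X$, and the function on $E$ given by $(x,x)\mapsto g(x)$ is what we must realize as the restriction of a separately continuous function on $X\times X$.

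Next I would verify that every hypothesis of Theorem \ref{th:3.1} is met in this situation. The whole space $X$ trivially has the extension property in itself, since any continuous function $X\to[-1,1]$ is extended by itself; hence both $X_1$ and $Y_1$ have the extension property in $X$ and $Y$. The diagonal map $x\mapsto(x,x)$ is a homeomorphism of $X$ onto $E$, its inverse being the restriction to $E$ of the first projection, so the function $(x,x)\mapsto g(x)$ on $E$ is of the first Baire class precisely because $g$ is. Finally, $X_1=X$ is functionally closed in $X$, since $X=h^{-1}(0)$ for the continuous function $h\equiv 0$, so the last of the four alternative conditions in Theorem \ref{th:3.1} holds (in fact so does the second, as $E$ is closed in $X\times X$ whenever $X$ is Hausdorff, but the functional closedness of $X_1$ suffices without any separation assumption).

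With the hypotheses in place, Theorem \ref{th:3.1} yields a separately continuous function $f:X\times X\to\br$ with $f|_E=g$, which is exactly the statement that $f(x,x)=g(x)$ for every $x\in X$. I do not expect any genuine obstacle here: the only points to notice are the two triviality checks — that $X$ has the extension property in itself and that $X$ is functionally closed in itself — together with the transport of the first Baire class property along the canonical homeomorphism between $X$ and the diagonal, none of which requires real work. Thus the theorem is an immediate corollary of Theorem \ref{th:3.1}.
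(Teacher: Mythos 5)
Your proposal is correct and is exactly the paper's route: Theorem \ref{th:3.2} is obtained as the special case $X=Y=X_1=Y_1$ of Theorem \ref{th:3.1} with $e$ the identity, and your triviality checks (the extension property of $X$ in itself, functional closedness of $X_1=X$, and transport of the Baire class property along the homeomorphism onto the diagonal) are precisely what makes that specialization legitimate.
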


\section{Functions on the product of compacts}

Now we consider the case when $X$ and $Y$ satisfy compactness type conditions. A set $E$ in a product $X\times Y$ is called {\it horizontally and vertically onepointed} if for every $x\in X$ and $y\in Y$ the sets $E\cap (\{x\}\tm Y)$ and $E\cap (X\tm \{y\})$ are at most countable and {\it horizontally and vertically $n$-pointed} if corresponding sets contain at most $n$ elements.

\begin{theorem} \label{th:4.1} Let $X$ and $Y$ be compacts, $E$ be a closed horizontally and vertically onepointed set in $X\times Y$ and $g: E\to\br$ be a function of the first Baire class. Then there exists a separately continuous function $\ftr$, for which $f|_E=g$.\end{theorem}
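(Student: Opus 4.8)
The plan is to reduce Theorem~\ref{th:4.1} to Theorem~\ref{th:3.1}: decompose $E$ into countably many graphs of homeomorphisms, handle each one by Theorem~\ref{th:3.1}, and patch the results together. I use throughout that $X\tm Y$, being compact Hausdorff, is normal, so every closed subset of $X\tm Y$ (or of $X$, or of $Y$) has the extension property by Tietze's theorem, and that closed subsets of $X\tm Y$ are compact, hence pseudocompact.

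\emph{Decomposition.} The first --- and most delicate --- step is to write $E=\bigcup_{n=1}^{\infty}E_n$ with each $E_n$ closed in $E$ and both $\pi_X$, $\pi_Y$ injective on $E_n$. For such an $E_n$, the maps $\pi_X|_{E_n}$, $\pi_Y|_{E_n}$ are continuous bijections of the compact $E_n$ onto Hausdorff spaces, hence homeomorphisms, so $E_n=\{(x,e_n(x)):x\in X_n\}$ with $e_n=\pi_Y\circ(\pi_X|_{E_n})^{-1}:X_n\to Y_n$ a homeomorphism between the closed sets $X_n=\pi_X(E_n)$ and $Y_n=\pi_Y(E_n)$, which therefore have the extension property. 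To produce this decomposition I would peel $E$ by a Cantor--Bendixson-type analysis of its sections --- removing, at each stage, the points that are non-isolated in their own vertical or horizontal section --- using that every section of $E$ is countable, hence scattered; a compactness argument is needed to keep the number of stages countable, and to see that each stage, made up of points isolated in both their sections, is itself a countable union of closed sets on which a projection is injective.

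\emph{Construction.} For each $n$, $g|_{E_n}$ is again of the first Baire class (a pointwise limit of continuous functions on $E$ restricts to one on $E_n$) and $E_n$ is pseudocompact, so Theorem~\ref{th:3.1} applies to $X,Y,X_n,Y_n,e_n,E_n,g|_{E_n}$. To obtain one separately continuous $f$ with $f|_E=g$ rather than one per piece, I would rerun the construction underlying Theorem~\ref{th:3.1} with a family indexed by \emph{both} the Baire level and the piece: fix continuous $g_m:E\to\br$ with $g_m\to g$ pointwise, apply Lemma~\ref{l:2.1} to each pair $(E_n,g_m|_{E_n})$ to get continuous $f_{m,n},h_{m,n}:X\tm Y\to\br$ with $f_{m,n}|_{E_n}=g_m|_{E_n}$, $E_n\ssq h_{m,n}^{-1}(0)$ and condition $(iii)$; build a functionally closed $F\supseteq E$ on which the relevant subfamilies converge by Lemma~\ref{l:2.2}; and take a locally finite partition of unity on $(X\tm Y)\stm F$ whose supports are localized near the individual graphs $E_n$ and shrink in the corresponding $h_{m,n}$ by Lemma~\ref{l:2.3}. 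Then $f$ is set equal to the resulting weighted sum on $(X\tm Y)\stm F$ and to the limit (now well defined, thanks to the localization) on $F$. Near a point of $E$ only the pieces $E_n$ that actually contain it stay active, which forces the value of $f$ there to be $\lim_m g_m=g$, and the slice-by-slice separate-continuity argument from the proof of Theorem~\ref{th:3.1} carries over, using $(iii)$ and properties $(a)$, $(b)$ of Lemma~\ref{l:2.3}.

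\emph{Main obstacle.} I expect the patching in the last step to be the principal difficulty, and it is intrinsic. A direct analogue of Lemma~\ref{l:2.1} for an arbitrary $E$ is impossible --- condition $(iii)$ fails as soon as $E$ has a horizontal or vertical section on which the data is non-constant --- so the decomposition into graphs is unavoidable; but once the cut is made the pieces $E_n$ may accumulate (near a point of $E$ whose section is infinite), so the separate continuity of the limit is not inherited for free and must be maintained by hand through the auxiliary functions $h_{m,n}$ and the localized partitions of unity. Getting this bookkeeping right --- localizing the partition finely enough near each graph that no earlier graph is disturbed, while still ending with a separately continuous function --- is where the real work lies; securing the countability of the decomposition in the first step runs a close second.
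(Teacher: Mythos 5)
Your reading of the hypothesis is the source of the trouble. In the paper's usage (consistent with the companion notion of an $n$-pointed set and with how the hypothesis is actually used), a horizontally and vertically onepointed set is one whose every section $E\cap(\{x\}\times Y)$ and $E\cap(X\times\{y\})$ contains \emph{at most one point}; the phrase ``at most countable'' in the definition is a slip in the text. You took the sections to be merely countable and set out to prove a much stronger statement, and that stronger statement is false: the paper's own example in Section 5 exhibits a closed, horizontally and vertically $2$-pointed (so certainly countable-sectioned) set $E\subseteq[0,1]^2$ and a first Baire class function $g$ on $E$ admitting no separately continuous extension, because the discontinuity set $D(f)$ of any separately continuous $f$ lies in a product of meagre sets, while $D(g)=E$ projects onto both factors. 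Consequently your decomposition-and-patching program cannot be completed: the ``main obstacle'' you flag --- the graphs $E_n$ accumulating at a point of $E$ whose section contains more than one point --- is not bookkeeping but exactly the obstruction realized by that example, so no choice of auxiliary functions $h_{m,n}$ and localized partitions of unity can rescue the patching step in general.

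Under the intended hypothesis the theorem is a short reduction to Theorem \ref{th:3.1}, and your opening paragraph already contains everything needed: since every section of $E$ is at most a single point, the projections of the compact set $E$ onto $X$ and $Y$ are continuous and injective, hence homeomorphisms onto their images $X_1$ and $Y_1$; these images are compact, hence closed in the compact Hausdorff (therefore normal) spaces $X$ and $Y$, and so have the extension property; and $E$ is the graph of the homeomorphism $e=\pi_Y\circ(\pi_X|_E)^{-1}\colon X_1\to Y_1$ and is pseudocompact. Theorem \ref{th:3.1} then gives the separately continuous extension directly; no decomposition of $E$ and no re-run of the construction behind Theorem \ref{th:3.1} are needed.
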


\begin{proof} Since the set $E$ is horizontally and vertically onepointed, the projections the compact set $E$ to the axis $X$ and $Y$ are continuous injective mappings. Hence,  $E$ is the graph of a homeomorphism $e: X_1\to Y_1$, where $X_1$ and $Y_1$ are the projections of $E$ on $X$ and $Y$ respectively. Now the existence of desired function $f$ follows from Theorem \ref{th:3.1}.
\end{proof}

\begin{theorem} \label{th:4.2} Let $X$ and $Y$ be a locally compact spaces such that $X\times Y$ be a paracompact, $E$ be a closed horizontally and vertically onepointed set and $g: E\to\br$ be the first Baire class function. Then there exists a separately continuous function $\ftr$ for which $f|_E=g$.\end{theorem}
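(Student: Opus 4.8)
The plan is to reduce to Theorem~\ref{th:4.1} by covering $X\tm Y$ by relatively compact ``boxes'' $A_i\tm B_k$, solving the problem on each box, and gluing the local solutions by a partition of the unit; the reason this last step works is that multiplying a separately continuous function by a (jointly) continuous one, and forming a locally finite sum of separately continuous functions, again produces a separately continuous function.

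First I would observe that $X$ and $Y$ are paracompact: if $Y=\O$ there is nothing to prove, and otherwise, fixing $y_0\in Y$, the space $X$ is homeomorphic to the closed subspace $X\tm\{y_0\}$ of the paracompact space $X\tm Y$, and symmetrically for $Y$. Since $X$ is locally compact and paracompact, the open sets with compact closure form a base of $X$; taking a locally finite open refinement of the cover of $X$ by such sets gives a locally finite open cover $(A_i)_{i\in I}$ of $X$ with every $\ov{A_i}$ compact, and analogously a cover $(B_k)_{k\in K}$ of $Y$. Then $(A_i\tm B_k)_{(i,k)\in I\tm K}$ is a locally finite open cover of $X\tm Y$ with each $\ov{A_i}\tm\ov{B_k}$ compact, and, $X\tm Y$ being normal, there is a partition of the unit $(\vp_{ik})$ on $X\tm Y$ subordinate to this cover, i.e.\ the sets $C_{ik}=\ov{\{p\in X\tm Y:\vp_{ik}(p)>0\}}$ satisfy $C_{ik}\ssq A_i\tm B_k$; since $C_{ik}$ is closed in the compact set $\ov{A_i}\tm\ov{B_k}$ it is compact, and the family $(C_{ik})$ is locally finite.

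Next, for each $(i,k)$ the set $E_{ik}=E\cap(\ov{A_i}\tm\ov{B_k})$ is closed in the product of compacts $\ov{A_i}\tm\ov{B_k}$ and is horizontally and vertically onepointed, while $g|_{E_{ik}}$ is of the first Baire class (a pointwise limit of the restrictions to $E_{ik}$ of the continuous functions defining $g$ on $E$). Hence Theorem~\ref{th:4.1} provides a separately continuous $f_{ik}\colon\ov{A_i}\tm\ov{B_k}\to\br$ with $f_{ik}|_{E_{ik}}=g|_{E_{ik}}$. I then put $F_{ik}=\vp_{ik}f_{ik}$ on $\ov{A_i}\tm\ov{B_k}$ and $F_{ik}=0$ elsewhere, and $f=\sum_{(i,k)}F_{ik}$; the family $(C_{ik})$ being locally finite, this sum is locally finite and $f$ is well defined. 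The identity $f|_E=g$ is then immediate: for $p\in E$, whenever $\vp_{ik}(p)\ne0$ we have $p\in A_i\tm B_k$, hence $p\in E_{ik}$ and $F_{ik}(p)=\vp_{ik}(p)g(p)$, so $f(p)=\bigl(\sum_{(i,k)}\vp_{ik}(p)\bigr)g(p)=g(p)$.

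The one point that needs care — and the main obstacle — is that every $F_{ik}$ is separately continuous on all of $X\tm Y$; granting this, $f$ is separately continuous because in a neighbourhood of any point it is a finite sum of separately continuous functions, and the theorem follows. To check it, fix $x_0\in X$ and $y_0\in Y$ and consider the section $y\mapsto F_{ik}(x_0,y)$. If $(x_0,y_0)\notin C_{ik}$, then $\vp_{ik}$, and hence $F_{ik}$, vanishes on a basic neighbourhood $N'\tm N$ of $(x_0,y_0)$, so the section is $0$ on $N$. If $(x_0,y_0)\in C_{ik}\ssq A_i\tm B_k$, then $A_i\tm B_k$ is an open neighbourhood of $(x_0,y_0)$ contained in $\ov{A_i}\tm\ov{B_k}$ on which $F_{ik}=\vp_{ik}f_{ik}$, so on the open set $B_k\ni y_0$ the section coincides with $\vp_{ik}(x_0,\cdot)f_{ik}(x_0,\cdot)$, which is continuous. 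Thus $y\mapsto F_{ik}(x_0,y)$ is continuous, and the sections in the other variable are treated in the same way.
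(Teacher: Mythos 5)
Your proof is correct and follows essentially the same route as the paper's: cover $X\tm Y$ by relatively compact open boxes, solve the problem on their compact closures via Theorem~\ref{th:4.1}, and glue the local solutions with a partition of the unit, using that $\vp_{ik}f_{ik}$ extended by zero is separately continuous and that locally finite sums preserve separate continuity. The only (inessential) difference is that you first prove $X$ and $Y$ are paracompact to build a locally finite cover of product form $(A_i\tm B_k)$, whereas the paper simply picks a box neighbourhood $W_p=U_p\tm V_p$ around each point and invokes paracompactness of $X\tm Y$ to get a subordinate partition of unity; your explicit verification of separate continuity of the glued pieces spells out a step the paper only asserts.
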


\begin{proof} For every $p=(x,y)\in X\times Y$ we choose open neighborhoods $U_p$ and $V_p$ of $x$ and $y$ in $X$ and $Y$ respectively such that the closure $X_p=\ov {U_p}$ and $Y_p=\ov {V_p}$ are compacts and the set $E_p=E\cap (X_p\tm Y_p)$ is horizontally and vertically onepointed. According to Theorem \ref{th:4.2} there exists a separately continuous function $f_p: X_p\tm Y_p\to\br$ for which $f_p|_{E_p}=g|_{E_p}$. Since the space $X\times Y$ is a paracompact, there exists a partition of the unit $(\varphi_i: i\in I)$ on $X\times Y$ which is subordinated to the open cover $(W_p=U_p\tm V_p: p\in X\times Y)$ of $X\times Y$ [7, p.447]. For every $i \in I$ we choose $p_i\in X\times Y$ such that $\rm{supp}\varphi_i$$\ssq W_{p_i}$ and put
$$
g_i(x,y)=\left\{
\begin{array}{l}
f_{p_i}(x,y), \quad \mbox{if $(x,y)\in W_{p_i}$},\\
0, \quad \mbox{if $(x,y)\not\in W_{p_i}$}.
\end{array}
\right.
$$

Note that the functions $\varphi_i\cdot g_i$ are separately continuous on $X\times Y$ and $(\varphi_i g_i)|_E=(\varphi_i |_E)g$. Then the function
$f=\displaystyle\sum\limits_{i\in I}\varphi_i g_i$ is the required.
\end{proof}

\section{Example}

Finally we give a example which show the essentiality of conditions under the set $E$ in Theorem \ref{th:4.1}.

Let $X=Y=[0,1]$, 
$$E_1=\{(\frac{2k-1}{2^n}, \frac{2k-1}{2^n}+\frac{1}{2^{n+1}}):
k=1,\dots ,2^{n-1}, n\in\bn\},$$
$E_2=\{(x,x): x\in X\}$, $E=E_1\cup E_2$, $g: E\to\br$,
$
g(x)=\left\{
\begin{array}{l}
1, \quad x\in E_1,\\
0, \quad x\in E_2.\\
\end{array}
\right. $ 
Clearly that $E$ is a closed horizontally and vertically $2$-pointed set in $X\times Y$ and $g$ is a function of the first Baire class. Since the set $E_1$ is dense in $E$, the set $D(g)$ of discontinuity points set of the function $g$ coincides with $E$. Therefore the projections of $D(g)$ on the axis $X$ and $Y$ coincide with $X$ and $Y$ respectively. On other hand, it is well-known that for every separately continuous function $\ftr$ the set $D(f)$ of points of discontinuity of $f$ is contained in the product $A\tm B$ of meagre sets  $A\ssq X$ and $B\ssq Y$ respectively. Thus, $D(g)\not\ssq D(f)$ and the function $f$ can not be extension of the function $g$.

\bibliographystyle{amsplain}

\end{document}